\newtheorem{thm}{Theorem}[section]
\newtheorem{lem}[thm]{Lemma}
\newtheorem{cor}[thm]{Corollary}
\theoremstyle{definition}
\newtheorem{defn}[thm]{Definition}
\theoremstyle{remark}
\newtheorem{rem}[thm]{Remark}
\newcommand{\abs}[1]{\left\vert#1\right\vert}
\newcommand{\pnorm}[1]{\left\Vert#1\right\Vert}
\newcommand{\set}[1]{\left\{#1\right\}}
\newcommand{\eps}{\varepsilon}
\DeclareMathOperator{\range}{Range}
\DeclareMathOperator{\dist}{dist}
\DeclareMathOperator{\re}{Re}
\DeclareMathOperator{\im}{Im}
\renewcommand{\P}{\mathbb {P}}
\DeclareMathOperator{\rea}{Reach}
\DeclareMathOperator{\unp}{Unp}
\begin{document}

\title{Defining Functions for Unbounded $C^m$ Domains}

\author{Phillip S. Harrington and Andrew Raich}%
\address{Department of Mathematical Sciences \\ 1 University of Arkansas \\ SCEN 301 \\ Fayetteville, AR 72701}
\email{psharrin@uark.edu, araich@uark.edu}

\thanks{The first author is partially supported by NSF grant DMS-1002332 and
second author is partially supported by NSF grant DMS-0855822}

\begin{abstract}For a domain $\Omega\subset\R^n$, we introduce the concept of a uniformly $C^m$ defining function.
We characterize uniformly $C^m$ defining functions in terms of the signed distance function for the boundary and provide a large class of examples of unbounded domains with
uniformly $C^m$ defining functions. Some of our results extend results from the bounded case.
\end{abstract}

\subjclass[2010]{53A07, 58C25, 58C07, 32T15}

\keywords{defining function, signed distance function, unbounded domains, uniformly $C^m$ defining function}

\maketitle

\section{Introduction}

Let $\Omega\subset\mathbb{R}^n$ be an open set.  A \emph{$C^m$ defining function}, $m\geq 1$, for $\Omega$ is a real-valued $C^m$ function $\rho$ defined on a neighborhood $U$ of $\partial\Omega$ such that $\set{x\in U:\rho(x)<0}=\Omega\cap U$ and $\nabla\rho\neq 0$ on $\partial\Omega$.  If $\Omega$ has a $C^m$ defining function, we say that $\Omega$ is a \emph{$C^m$ domain}.

For many applications on unbounded domains, the preceding definition is inadequate.  For example, to work in local coordinates that are adapted to the boundary, it is necessary to work in a neighborhood whose size depends on the $C^2$ norm of the defining function.  If the $C^2$ norm is not uniformly bounded, then such neighborhoods may need to be arbitrarily small, which means that a partition of unity subordinate to these neighborhoods might not have uniform bounds on the derivatives.  Other problems might arise in constructions which involve choosing a constant large enough to bound quantities depending on derivatives of the defining function.  Typical results on $C^m$ domains will require the following:
\begin{defn}
  Let $\Omega\subset\mathbb{R}^n$, and let $\rho$ be a $C^m$ defining function for $\Omega$ defined on a neighborhood $U$ of $\partial\Omega$ such that
  \begin{enumerate}
    \item $\dist(\partial\Omega,\partial U)>0$,

    \item $\pnorm{\rho}_{C^m(U)}<\infty$,

    \item $\inf_U|\nabla\rho|>0$.
  \end{enumerate}
  We say that such a defining function is \emph{uniformly $C^m$}.  If $\rho$ on $U$ is uniformly $C^m$ for all $m\in\mathbb{N}$, we say $\rho$ is \emph{uniformly $C^\infty$}.
\end{defn}
On bounded domains, compactness of the boundary implies that every bounded $C^m$ domain has a uniformly $C^m$ defining function.  On unbounded $C^m$ domains with noncompact boundaries, these properties may not hold.  For example, consider $\Omega\subset\mathbb{R}^3$ defined by $\Omega=\set{z<x y^2}$.  This is a $C^{\infty}$ domain, and any $C^2$ defining function $\rho$ for $\Omega$ will take the form $\rho(x,y,z)=h(x,y,z)(z-x y^2)$ for a $C^1$ function $h$ satisfying $h>0$ on $\partial\Omega$.  If we restrict to the line $\ell=\set{y=z=0}\subset\partial\Omega$, we see that $|\nabla\rho||_\ell=h$ and $\frac{\partial^2\rho}{\partial y^2}|_\ell=-2x h$.  If $|\nabla\rho|>C_1>0$ on $U$ then $h>C_1$ on $\ell$, but if $\pnorm{\rho}_{C^2(U)}<C_2$ then $2\abs{x}h<C_2$.  This is impossible if $\abs{x}\geq\frac{C_2}{2 C_1}$, so no defining function for $\Omega$ is uniformly $C^2$, even though the domain itself is $C^\infty$.

The natural choice for a defining function is the signed distance function.  For $\Omega\subset\mathbb{R}^n$ with $C^m$ boundary, define the signed distance function for $\Omega$ by
\[
  \tilde\delta(x)=\begin{cases}d(x,\partial\Omega) & x\notin\Omega\\-d(x,\partial\Omega) & x\in\overline\Omega\end{cases}.
\]
Note that the distance function $\delta(x) := d(x,\partial\Omega)=|\tilde\delta(x)|$ for any $x\in\mathbb{R}^n$.

Let $\unp(\partial\Omega) = \set{ x\in\R^n : \text{ there exists a unique point }y\in \partial\Omega \text{ such that }\delta(x) = |y-x|}$.
The following concepts were introduced in \cite{Fed59}.
\begin{defn}If $y\in \partial\Omega$, then define the \emph{reach} of $\partial\Omega$ at $y$ by
\[
\rea(\partial\Omega,y) = \sup\set{r\geq 0: B(y,r)\subset \unp(\partial\Omega)}
\]
and the reach of $\partial\Omega$ to be
\[
\rea(\partial\Omega) = \inf\set{ \rea(\partial\Omega,y): y\in \partial\Omega}.
\]
\end{defn}

Our main result is the following:
\begin{thm}\label{thm:main thm}
  Let $\Omega\subset\mathbb{R}^n$ be a $C^m$ domain, $m\geq 2$.  Then the following are equivalent:
  \begin{enumerate}
    \item \label{item:uniform} $\Omega$ has a uniformly $C^m$ defining function.

    \item \label{item:delta} $\partial\Omega$ has positive reach, and for any $0<\epsilon<\rea(\partial\Omega)$, the signed distance function satisfies
    $\|\tilde\delta\|_{C^m(U_\ep)}<\infty$ on $U_\epsilon=\set{x\in\mathbb{R}^n:\delta(x)<\epsilon}$.

    \item \label{item:rho} There exists a $C^m$ defining function $\rho$ for $\Omega$ and a constant $C>0$ such that for every point $p\in\partial\Omega$ with local coordinates $\set{y_1,\ldots,y_n}$ satisfying $\frac{\partial\rho}{\partial y_j}(p)=0$ for $1\leq j\leq n-1$, we have
        \[
          \abs{\nabla\rho(p)}^{-1}\abs{\frac{\partial^k\rho(p)}{\partial y_I\partial y_n^j}}<C
        \]
        where $I$ is a multi-index of length $k-j$ with $n\notin I$ for any integers $2\leq k\leq m$ and $0\leq j\leq\min\set{m-k,k}$.
  \end{enumerate}
\end{thm}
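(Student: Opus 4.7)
The plan is to establish the cyclic implications (\ref{item:delta}) $\Rightarrow$ (\ref{item:uniform}) $\Rightarrow$ (\ref{item:rho}) $\Rightarrow$ (\ref{item:delta}), with the last being the main content. For (\ref{item:delta}) $\Rightarrow$ (\ref{item:uniform}): when $\partial\Omega$ has positive reach $r$, the classical theory (Federer) tells us that $\tilde\delta$ is $C^m$ on $U_r$ with $|\nabla\tilde\delta|\equiv 1$, so for any $\epsilon<r$ the function $\tilde\delta$ restricted to $U_\epsilon$ is itself a uniformly $C^m$ defining function. For (\ref{item:uniform}) $\Rightarrow$ (\ref{item:rho}): if $\|\rho\|_{C^m(U)}\leq M$ and $|\nabla\rho|\geq c>0$, then every partial derivative of $\rho$ of order $\leq m$ is bounded by $M$ in any coordinates, giving (\ref{item:rho}) with $C=M/c$.

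The main work is (\ref{item:rho}) $\Rightarrow$ (\ref{item:delta}). At each $p\in\partial\Omega$ I would work in adapted coordinates with $y_n$ normal at $p$ and use the implicit function theorem to write $\partial\Omega$ locally as a graph $y_n=\phi_p(y')$ with $\phi_p(0)=0$ and $\nabla\phi_p(0)=0$. The $k=2$, $j=0$ case of (\ref{item:rho}) yields a uniform bound on the Hessian of $\phi_p$ at $0$, hence on the second fundamental form of $\partial\Omega$ at $p$. This supplies a uniform lower bound on the local reach (so $\rea(\partial\Omega)>0$) and a radius $r_0>0$, independent of $p$, on which $\partial\Omega$ is a graph over the tangent hyperplane at $p$. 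For higher derivatives of $\phi_p$ at $0$, I would differentiate the identity $\rho(y',\phi_p(y'))=0$ repeatedly. A Fa\`a di Bruno analysis, after using $\nabla\phi_p(0)=0$ to discard every term containing a first-order derivative of $\phi_p$, shows that each surviving term is a product of higher-order derivatives of $\phi_p$ with a derivative $\partial^k\rho(p)/(\partial y_I\partial y_n^j)$, $|I|=k-j$, divided by a power of $\partial_n\rho(p)=|\nabla\rho(p)|$. The combinatorial constraint from Fa\`a di Bruno --- that the $r=k$ blocks of a partition of $\{1,\ldots,m\}$ include $b=j$ non-singleton blocks of total size at least $2j$ --- is exactly $j\leq m-k$, matching the index range in (\ref{item:rho}). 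Hence (\ref{item:rho}) bounds precisely the derivatives of $\rho$ that actually appear, and an induction yields uniform $C^m$ bounds on $\phi_p$ over $B_{r_0}$.

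With uniform $C^m$ control of each $\phi_p$ in hand, I would parametrize the tubular neighborhood $U_\epsilon$ (for $\epsilon<\rea(\partial\Omega)$) by $(q,t)\mapsto q+t\nu(q)$ and express the derivatives of $\tilde\delta$ in terms of the Weingarten map of $\partial\Omega$ and its tangential derivatives --- equivalently, in terms of derivatives of the $\phi_p$. This yields $\|\tilde\delta\|_{C^m(U_\epsilon)}<\infty$, completing (\ref{item:delta}). The main obstacle will be the Fa\`a di Bruno bookkeeping, in particular verifying rigorously that the exponent constraint $j\leq m-k$ coming from the partition combinatorics coincides exactly with the one imposed in (\ref{item:rho}); a secondary difficulty is uniformity across $p$, which requires tracking all constants in terms of the single $C$ from (\ref{item:rho}) through quantitative versions of the implicit function theorem and the classical formulas for the distance function on a tube of positive reach.
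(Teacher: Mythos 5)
Your plan follows the same high-level skeleton as the paper --- the implications (2)~$\Rightarrow$~(1)~$\Rightarrow$~(3) are dispatched as immediate, and all the work goes into (3)~$\Rightarrow$~(2) --- but the technique you propose for the hard implication is genuinely different from what the paper does. The paper never parametrizes $\partial\Omega$ as a graph: its Lemma~\ref{lem:rho_delta} works directly with the relation $\tilde\delta = h\rho$ and the Eikonal identity $\abs{\nabla\tilde\delta}^2=1$, using the latter to recover normal derivatives of $\tilde\delta$ from tangential ones and the former to transfer derivative bounds between $\rho$, $h$, and $\tilde\delta$ via a nested double induction (upward on the order $k$, downward on the number $j$ of normal derivatives). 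This yields an \emph{iff} statement, which the paper exploits again in Section~\ref{sec:projective_space} to exhibit defining functions that do and do not satisfy the $C^k_b$-condition. Your Fa\`a di Bruno analysis of the implicit equation $\rho(y',\phi_p(y'))=0$ is a correct alternative for the single direction you need, and you have the combinatorics right: after discarding terms containing $\nabla\phi_p(0)=0$, a surviving term with $j$ normal derivatives on $\rho$ corresponds to $j$ blocks of size $\geq 2$ plus $k-j$ singletons, so $m \geq (k-j)+2j$, i.e.\ $j\leq m-k$, exactly matching (3). Likewise your tube parametrization $(q,t)\mapsto q+t\nu(q)$ is morally the same as the paper's Lemma~\ref{lem:delta_neighborhood}, though the paper's ODE trick --- multiplying by the integrating factor $\mu_I(y_n)=\prod_j(1+y_n\kappa_{I_j}(0))$ and integrating \eqref{eq:delta_derivatives} along normal lines --- is arguably tidier than pushing the Weingarten map through the inverse of the tube map.

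Two places in your plan are thinner than they should be. First, (3) hands you bounds on derivatives of $\rho$ at $p$ \emph{in the adapted coordinates anchored at $p$}, which via Fa\`a di Bruno control $\partial^a\phi_p(0)$ --- but to get uniform $C^m$ bounds on $\phi_p$ \emph{over} $B_{r_0}$, as your plan requires, you must transfer bounds from the adapted frame at each nearby boundary point $q=(y',\phi_p(y'))$ back to the $p$-frame. That coordinate change is a rotation depending on $\nabla\phi_p(y')$, so it costs another round of Fa\`a di Bruno with its own uniformity checks; this is real work that you wave away with ``an induction yields uniform $C^m$ bounds over $B_{r_0}$.'' The paper avoids the issue entirely by never introducing graphs: it takes $C^k$ norms of $\tilde\delta$ on $\partial\Omega$ in the ambient extrinsic sense, which are already frame-independent.

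Second --- and this is a point on which you and the paper both say less than the claim requires --- the inference from the $k=2$, $j=0$ bound (equivalently a uniform second-fundamental-form bound) to $\rea(\partial\Omega)>0$ is only a \emph{local} curvature statement. On a noncompact boundary, positive reach also requires that far-apart pieces of $\partial\Omega$ do not come arbitrarily close, which bounded curvature alone does not preclude (think of two parallel sheets whose gap decays to zero: each is flat, yet the reach vanishes). The paper's own Lemma~2.4 asserts the equivalence of \eqref{eq:C2_reach} with positive reach, but its proof passes from ``the parabola condition $\abs{y}^2/(2r)\geq\abs{y_n}$ holds at each point'' to the pointwise Hessian bound and back, silently treating the parabola condition --- which quantifies over \emph{all} of $\partial\Omega$ --- as if it were local. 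If you intend ``uniform lower bound on the local reach'' to produce $\rea(\partial\Omega)>0$ without further argument, you should flag this as the place where a global non-self-intersection estimate is actually being invoked.
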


\begin{rem}
  An important consequence of this theorem is that our definition of uniformly $C^\infty$ is not too strong.  If for every $m\in\mathbb{N}$ there exists a defining function $\rho_m$ on $U_m$ such that $\rho_m$ is uniformly $C^m$ on $U_m$, then there exists a uniformly $C^\infty$ defining function $\rho$, and we can take $\rho$ to be the signed distance function.
\end{rem}

\begin{rem}\label{rem:Krantz/Parks extension}
In \cite{KrPa81}, Krantz and Parks show that if $\Omega$ is a $C^m$ domain, $m\geq 2$, then there exists a neighborhood $U\supset \partial\Omega$ on which $\tilde\delta$ is $C^m$.
Part \eqref{item:delta} of Theorem \ref{thm:main thm} extends their result by showing that $\tilde\delta$ is $C^m$ up to $\rea(\partial\Omega)$.
\end{rem}

\begin{proof}
  That \eqref{item:delta} implies \eqref{item:uniform} and \eqref{item:uniform} implies \eqref{item:rho} are immediate from the definitions.  That \eqref{item:rho} implies \eqref{item:delta} will follow from Lemmas \ref{lem:rho_delta} and \ref{lem:delta_neighborhood}, proved in Section \ref{sec:basic_results}.
\end{proof}

When studying the asymptotic behavior of a domain, it is natural to consider the domain after embedding $\mathbb{R}^n\subset\mathbb{RP}^n$, and we will do so in Section \ref{sec:projective_space}.  Our theorem will make it easy to check that any $C^m$ domain in $\mathbb{R}^n$ which can be extended to a $C^m$ domain in $\mathbb{RP}^n$ under this embedding will have a uniformly $C^m$ defining function.  However, we will also show that there are examples which are not even $C^1$ in $\mathbb{RP}^n$ but still have uniformly $C^m$ defining functions.

We conclude the paper in Section \ref{sec:applications} with two specific applications of uniformly $C^m$ defining functions. The first is the construction of weighted Sobolev spaces
on unbounded domains, and the second is a brief example from several complex variables to illustrate the advantages of uniformly $C^m$ defining functions in generalizing some well-known constructions.

Over the course of several papers, we will study domains $\Omega$ that admit a uniformly $C^m$ defining function, build weighted Sobolev spaces on them, and develop the elliptic theory associated
to the Sobolev spaces \cite{HaRa13s}. We will then be in a position to investigate the  the $\dbar$-Neumann and $\dbarb$-problems in weighted $L^2$ on $\Omega\subset\C^n$.
Gansberger has obtained compactness results for the $\dbar$-Neumann operator in weighted $L^2$ \cite{Gan10}, but (at the time)
there was neither the elliptic theory nor suitable Sobolev space theory
to study the $\dbar$-Neumann problem in $H^s$ or facilitate the passage from the $\dbar$-Neumann operator at the Sobolev scale $s=1/2$ to the complex Green operator on $\partial\Omega$ in
weighted $L^2$. There are other results about solution operators to $\dbar$ in the unbounded setting but for the case $\Omega = \C^n$, rendering any boundary discussion  moot \cite{HaHe07,Gan11}.

\section{Basic Results}
\label{sec:basic_results}

To  handle rigorously multi-indices with possibly repeated indices, we identify functions with sets of ordered pairs and define a multi-index of length $k$ to be a function
$I:S\rightarrow\set{1,\ldots,n}$ defined on a subset $S$ of the natural numbers such that $\abs{I}=\abs{S}=k$.  If $S=\set{s_1,\ldots,s_k}$ where $\set{s_j}$ is an increasing sequence, we write $I_j=I(s_j)$.  Hence, $\frac{\partial}{\partial x_I}=\frac{\partial}{\partial x_{I_1}}\cdots\frac{\partial}{\partial x_{I_k}}$.  We will identify $I$ with its range, and write $n\in I$ to mean $n\in\range(I)$.  The set of all increasing multi-indices is defined by
\[
  \mathcal{I}_k=\set{I:\set{1,\ldots,k}\rightarrow\set{1,\ldots,n},I\text{ is an increasing function.}}.
\]
By the identification of a function with a set of ordered pairs, all set theoretic operations are defined for multi-indices.

Below, we will take the $C^k$ norm of a function on $\partial\Omega$. We take an extrinsic view, and  for a $C^k$ function $f$ defined on a neighborhood of $\partial\Omega$, we set
\[
\|f\|^2_{C^k(\partial\Omega)} = \sup_{p\in\partial\Omega} \sum_{j=0}^k \sum_{I\in\I_j} \Big| \frac{\p^j f(p)}{\p x_I}\Big|^2=\inf_{U\supset\partial\Omega}\pnorm{f}^2_{C^k(U)}.
\]
The intrinsic $C^k$ norm of a defining function is always zero, hence our use of the extrinsic norm.

For $p\in\partial\Omega$, let $\set{y_1,\ldots,y_n}$ be orthonormal coordinates such that $\nabla\tilde\delta(p)=(0,\ldots,0,1)$.  For functions $f$ defined in a neighborhood of $p$, we define a family of special $C^k$ norms that is adapted to the boundary.  For any integer $k\geq 0$, define
\[
  \abs{f}^2_{C^{k}_b(p)}=\sum_{k'=0}^{k}\sum_{j'=0}^{\min\set{k-k',k'}}\sum_{I\in\mathcal{I}_{k'-j'},n\notin I}\abs{\frac{\partial^{k'} f(p)}{\partial y_I\partial y_n^{j'}}}^2.
\]
The $C^k_b$ norms provide a balance between computability (derivatives are only with respect to $\set{y_j}$) and theoretical elegance (intrinsic tangential derivatives and the normal). In particular,
terms in the $C^k_b$ norm agree with terms in the expansion of a $k$-fold composition of tangential differential operators with respect to local coordinates.
For the purposes of induction, we also define for any integers $k\geq 1$ and $k\geq 2j\geq 0$
\[
  \abs{f}^2_{C^{k,j}_b(p)}=\abs{f}^2_{C^{k-1}_b(p)}+\sum_{j'=j}^{\lfloor k/2\rfloor}\sum_{I\in\mathcal{I}_{k-2j'},n\notin I}\abs{\frac{\partial^{k-j'} f(p)}{\partial y_I\partial y_n^{j'}}}^2.
\]
The $C^{k,j}_b$ are intermediate norms between $C^{k}_b$ and $C^{k-1}_b$. In particular,  $\abs{f}^2_{C^{k,0}_b(p)}=\abs{f}^2_{C^k_b(p)}$.
Also, when $k$ is even $\abs{f}_{C^{k,k/2}_b(p)}^2 = \abs{f}_{C^{k-1}_b(p)}^2 + \abs{\frac{\p^{k/2}f(p)}{\p y_n^{k/2}}}^2$ and when $k$ is odd $\abs{f}_{C^{k,(k-1)/2}_b(p)}^2 = \abs{f}_{C^{k-1}_b(p)}^2 + \sum_{\ell=1}^{n-1}\abs{\frac{\p^{(k+1)/2}f(p)}{\partial y_\ell\p y_n^{(k-1)/2}}}^2$.
In general, if $I$ is a multi-index such that $n\notin I$ and $j\geq 0$ is an integer then
\begin{equation}
\label{eq:special_norm_estimate}
  \abs{\frac{\partial^{\abs{I}+j} f(p)}{\partial y_I\partial y_n^j}}\leq\abs{f}_{C^{\abs{I}+2j,j}_b(p)}\leq\abs{f}_{C^{\abs{I}+2j}_b(p)}.
\end{equation}

The utility of this norm can be seen from the following lemma.
\begin{lem}
\label{lem:rho_delta}
  Let $\Omega\subset\mathbb{R}^n$ have $C^m$ boundary, $m\geq 2$.  Let $\rho$ be a $C^m$ defining function for $\Omega$ and let $h$ be the positive $C^{m-1}$ function defined in a neighborhood of $\partial\Omega$ by $\tilde\delta=h\rho$.  Then
  \[
    \sup_{p\in\partial\Omega}\frac{\abs{\rho}_{C^m_b(p)}}{\abs{\nabla\rho(p)}}<\infty.
  \]
  if and only if
  \[
    \|\tilde\delta\|_{C^m(\partial\Omega)}<\infty\textrm{ and }\sup_{p\in\partial\Omega}\frac{\abs{h}_{C^{m-2}_b(p)}}{h(p)}<\infty.
  \]
\end{lem}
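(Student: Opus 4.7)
Throughout, at each $p\in\partial\Omega$ I work in the orthonormal coordinates $\set{y_1,\ldots,y_n}$ aligned with $\nabla\tilde\delta(p)$.  The defining relation $\tilde\delta = h\rho$ together with the facts $\rho(p) = 0$, $\partial_{y_i}\rho(p) = 0$ for $i<n$, and $\partial_{y_n}\rho(p) = 1/h(p)$, after dropping every Leibniz term whose $\rho$-factor vanishes at $p$, yields the basic identity
\begin{equation}\label{eq:plan}
  \partial^\alpha\tilde\delta(p) = h(p)\partial^\alpha\rho(p) + \alpha_n\frac{\partial^{\alpha - e_n}h(p)}{h(p)} + \sum_{\substack{0 < \beta < \alpha \\ |\alpha - \beta| \geq 2}}\binom{\alpha}{\beta}\partial^\beta h(p)\,\partial^{\alpha - \beta}\rho(p),
\end{equation}
the middle term being present only when $\alpha_n\geq 1$ and $|\alpha|\geq 2$.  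The proof is by induction on $m\geq 2$.

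The base case $m=2$ is direct.  Specializing \eqref{eq:plan} to $|\alpha|=2$ purely tangential gives $\partial_{y_i}\partial_{y_j}\tilde\delta(p) = h(p)\partial_{y_i}\partial_{y_j}\rho(p)$ for $i,j<n$, and differentiating the eikonal identity $|\nabla\tilde\delta|^2 = 1$ at $p$ shows that every normal-mixed second derivative of $\tilde\delta$ vanishes there.  Since $|h|_{C^0_b}/h=1$ trivially, both sides of the biconditional reduce to the boundedness of $h(p)^2\sum_{i,j<n}|\partial_{y_i}\partial_{y_j}\rho(p)|^2$.

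For the reverse direction of the inductive step, rewrite \eqref{eq:plan} as $h(p)\partial^\alpha\rho(p) = \partial^\alpha\tilde\delta(p) - [\text{remainder}]$ and induct on the weight $|\alpha^t|+2\alpha_n\leq m$.  Writing each remainder term as $(\partial^\beta h(p)/h(p))\cdot(h(p)\partial^{\alpha-\beta}\rho(p))$, the two weights sum to $|\alpha^t|+2\alpha_n$, and because every surviving $\rho$-factor has weight at least $2$, the $h$-factor weight is at most $m-2$, within the hypothesis, while the $\rho$-factors have strictly smaller $|\alpha|$ and are handled inductively.

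The forward direction is where I expect the main obstacle, namely extracting $|h|_{C^{m-2}_b}/h$ from the $\rho$-bound.  Fix $\gamma$ with $|\gamma^t|+2\gamma_n = m-2$ and apply \eqref{eq:plan} with $\alpha = \gamma + e_n$.  The summand $(\gamma_n+1)\partial^\gamma h(p)/h(p)$ is isolated, and the left side $\partial^\alpha\tilde\delta(p)$, of order $|\gamma|+1\leq m-1$, is controlled by the inductive forward implication at level $m-1$.  The cross-product terms are the subtle point: a weight count shows that the only ones whose $h$-factor still has weight $m-2$ come from $|\alpha-\beta|=2$ with $(\alpha-\beta)_n = 0$, in which case necessarily $\beta_n = \gamma_n + 1 > \gamma_n$.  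This is precisely the hierarchy that the intermediate norms $|\cdot|_{C^{k,j}_b}$ are built for: run a nested induction on $j$ downward from $\lfloor(m-2)/2\rfloor$, where no such cross terms exist, to $j=0$, at fixed $k = m-2$, so that each problematic cross-term $h$-factor has already been bounded by the previous step of the nested induction.  Once $|h|_{C^{m-2}_b}/h$ is in hand, substituting back into \eqref{eq:plan} and invoking the eikonal identity to reduce normal derivatives of $\tilde\delta$ at $p$ to tangential ones delivers $\|\tilde\delta\|_{C^m(\partial\Omega)}<\infty$, completing the induction.
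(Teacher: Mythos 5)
Your argument is correct and follows the paper's own proof closely: the identity you derive in your equation from the Leibniz expansion of $\tilde\delta = h\rho$ at boundary points is the paper's central formula, the differentiated eikonal identity $|\nabla\tilde\delta|^2=1$ handles normal derivatives of $\tilde\delta$, and the nested downward induction on the normal index $j$ via the intermediate norms $|\cdot|_{C^{k,j}_b}$ is exactly how the paper isolates the problematic cross-terms in the forward direction. The one technicality you gloss over is that, since $h$ is merely $C^{m-1}$, the top-order Leibniz expansion cannot be performed by direct differentiation and instead requires a difference-quotient argument at $p$ (using $\rho(p)=0$ to kill the offending top-order $h$-term), as the paper notes explicitly.
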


\begin{rem}
  When $m=2$ the statement about $h$ is trivial, so the conditions on $\rho$ and $\tilde\delta$ are equivalent.  We will see in \eqref{eq:delta_C2} that something stronger is true in this case.
\end{rem}

\begin{proof}
Since $|\nabla\tilde\delta|^2=1$ on a neighborhood of $\partial\Omega$ (see \cite{KrPa81} and Theorem 4.8 (3) in \cite{Fed59}), for $I\in\mathcal{I}_k$ with $1\leq k\leq m-1$, we can differentiate this equality by
$\frac{\p^k}{\p x_I}$ to obtain
\[
  \sum_{j=1}^n\sum_{J\subseteq I}\frac{\partial}{\partial x_j}\left(\frac{\partial^{\abs{J}}\tilde\delta}{\partial x_J}\right)\frac{\partial}{\partial x_j}\left(\frac{\partial^{k-\abs{J}}\tilde\delta}{\partial x_{I\backslash J}}\right)=0.
\]
on $\partial\Omega$.  For fixed $p\in\partial\Omega$, choose coordinates $(y_1,\ldots,y_n)$ so that $p=0$ and $\nabla_y\tilde\delta(p)=(0,\ldots,0,1)$.  In these coordinates,
\begin{equation}
\label{eq:delta_normal_derivatives}
  \sum_{j=1}^n\sum_{J\neq\emptyset, J\subsetneq I}\frac{\partial}{\partial y_j}\left(\frac{\partial^{\abs{J}}\tilde\delta}{\partial y_J}\right)\frac{\partial}{\partial y_j}\left(\frac{\partial^{k-\abs{J}}\tilde\delta}{\partial y_{I\backslash J}}\right)(p)+2\frac{\partial}{\partial y_n}\left(\frac{\partial^{k}\tilde\delta}{\partial y_{I}}\right)(p)=0.
\end{equation}
From this, we conclude that
\begin{equation}
\label{eq:delta_normal_estimate}
  \abs{\frac{\partial}{\partial y_n}\left(\frac{\partial^{k}\tilde\delta}{\partial y_{I}}\right)(p)}\leq C_1\|\tilde\delta\|_{C^k(\partial\Omega)}^2.
\end{equation}
for some constant $C_1>0$ and for any $I\in\mathcal{I}_k$ with $1\leq k\leq m-1$.

Since $h$ is $C^{m-1}$, we may differentiate $\tilde\delta=h\rho$ in a neighborhood of $\partial\Omega$ by $\frac{\p^k}{\p x_I}$ for $I\in\mathcal{I}_k$ to obtain
\[
  \frac{\partial^k\tilde\delta}{\partial x_{I}}=\sum_{J\subseteq I}\frac{\partial^{\abs{J}}h}{\partial x_J}\frac{\partial^{k-\abs{J}}\rho}{\partial x_{I\backslash J}}.
\]
This can not be differentiated directly again if $k=m-1$ because $h$ is only $C^{m-1}$, but we may form a difference quotient at $p\in\partial\Omega$ and take the limit to obtain
\begin{equation}
\label{eq:k+1_derivatives}
  \frac{\partial^{k+1}\tilde\delta(p)}{\partial x_{I}}=\sum_{J\subsetneq I}\frac{\partial^{\abs{J}}h(p)}{\partial x_J}\frac{\partial^{k+1-\abs{J}}\rho(p)}{\partial x_{I\backslash J}}
\end{equation}
for any $I\in\mathcal{I}_{k+1}$, since $\rho(p)=0$.  In our special coordinates note that $\frac{\partial\rho}{\partial y_j}(p)=0$ if $j\neq n$, so if $n\notin I$ in these coordinates, all of the terms with first derivatives of $\rho$ will also vanish, leaving us with
\begin{equation}
\label{eq:delta_tangent_estimate}
  \abs{\frac{\partial^{k+1}\tilde\delta(p)}{\partial y_{I}}}\leq C_2\abs{h}_{C^{k-1}_b(p)}\abs{\rho}_{C^{k+1}_b(p)}
\end{equation}
for some constant $C_2>0$.

For $0\leq j\leq k'$ and $I\in\mathcal{I}_{k'-j}$ with $n\notin I$, we obtain from \eqref{eq:k+1_derivatives} the equation
\begin{multline*}
  \frac{\partial^{k'+1}\tilde\delta(p)}{\partial y_{I}\partial y_n^{j+1}}
  =\sum_{\ell=0}^{j}\sum_{J\subseteq I}\binom{j+1}{\ell}\frac{\partial^{\abs{J}+\ell}h(p)}{\partial y_J\partial y_n^\ell}\frac{\partial^{k'-\abs{J}+1-\ell}\rho(p)}{\partial y_{I\backslash J}\partial y_n^{j+1-\ell}}\\
  +\sum_{J\subset I,\abs{J}\leq k'-j-2}\frac{\partial^{\abs{J}+j+1}h(p)}{\partial y_J\partial y_n^{j+1}}\frac{\partial^{k'-j-\abs{J}}\rho(p)}{\partial y_{I\backslash J}}.
\end{multline*}
Subtracting the highest order terms in $h$ (with respect to the $C^k_b$ norm), we can use \eqref{eq:special_norm_estimate} to estimate the remainder by
\begin{multline*}
  \abs{\frac{\partial^{k'+1}\tilde\delta(p)}{\partial y_{I}\partial y_n^{j+1}}-(j+1)\frac{\partial^{k'}h(p)}{\partial y_I\partial y_n^j}\frac{\partial\rho(p)}{\partial y_n}
  -\sum_{J\subset I,\abs{J}=k'-j-2}\frac{\partial^{k'-1}h(p)}{\partial y_J\partial y_n^{j+1}}\frac{\partial^{2}\rho(p)}{\partial y_{I\backslash J}}}\\
  \leq C_3\abs{h}_{C^{k'+j-1}_b(p)}\abs{\rho}_{C^{k'+j+2}_b(p)}.
\end{multline*}
for some constant $C_3>0$ and integers $k'\geq j+2$.  If $j+2>k'\geq j$, we have simply
\[
  \abs{\frac{\partial^{k'+1}\tilde\delta(p)}{\partial y_{I}\partial y_n^{j+1}}-(j+1)\frac{\partial^{k'}h(p)}{\partial y_I\partial y_n^j}\frac{\partial\rho(p)}{\partial y_n}}
  \leq C_3\abs{h}_{C^{k'+j-1}_b(p)}\abs{\rho}_{C^{k'+j+2}_b(p)}.
\]
Suppose that $0\leq j\leq\frac{k-1}{2}$ and set $k'=k-j-1$ (so that $I\in \mathcal{I}_{k-2j-1}$).  Note that $\frac{\partial\rho}{\partial y_n}(p)=|\nabla\rho(p)|$ and $\frac{\partial\tilde\delta}{\partial y_n}(p)=1$, so by \eqref{eq:k+1_derivatives} with $k=0$ we have
\begin{equation}\label{eqn:h nabla rho =1}
h(p)|\nabla\rho(p)| = 1.
\end{equation}
Thus we have
\begin{multline}
\label{eq:h_estimate}
  (j+1)\abs{\frac{\partial^{k-j-1}h(p)}{\partial y_I\partial y_n^j}}h(p)^{-1}\\
  \leq \abs{\frac{\partial^{k-j}\tilde\delta(p)}{\partial y_{I}\partial y_n^{j+1}}}
  +\sum_{J\subset I,\abs{J}=k-2j-3}\abs{\frac{\partial^{k-j-2}h(p)}{\partial y_J\partial y_n^{j+1}}\frac{\partial^{2}\rho(p)}{\partial y_{I\backslash J}}}+C_3\abs{h}_{C^{k-2}_b(p)}\abs{\rho}_{C^{k+1}_b(p)}
\end{multline}
if $k\geq 2j+3$, and
\begin{equation}
\label{eq:h_estimate_0}
  (j+1)\abs{\frac{\partial^{k-j-1}h(p)}{\partial y_I\partial y_n^j}}h(p)^{-1}
  \leq \abs{\frac{\partial^{k-j}\tilde\delta(p)}{\partial y_{I}\partial y_n^{j+1}}}+C_3\abs{h}_{C^{k-2}_b(p)}\abs{\rho}_{C^{k+1}_b(p)}.
\end{equation}
if $k<2j+3$.

We now proceed by induction.  Assume $\sup_{p\in\partial\Omega}\frac{\abs{\rho}_{C^m_b(p)}}{\abs{\nabla\rho(p)}}<\infty$.  Suppose that for some $m-1\geq k\geq 1$, $\|\tilde\delta\|_{C^{k}(\partial\Omega)}<\infty$ and $\sup_{p\in\partial\Omega}\frac{\abs{h}_{C^{k-2}_b(p)}}{h(p)}<\infty$.  When $k=1$, this is clear since $\|\tilde\delta\|_{C^1(\partial\Omega)}=1$ and the condition on $h$ is vacuous.  Using $j=\lfloor\frac{k-1}{2}\rfloor$ with \eqref{eq:h_estimate_0} and the induction hypothesis we can show that $\sup_{p\in\partial\Omega}\frac{\abs{h}_{C^{k-1,\lfloor(k-1)/2\rfloor}_b(p)}}{h(p)}<\infty$.  Suppose that for some $0\leq j\leq\frac{k-3}{2}$ we know that $\sup_{p\in\partial\Omega}\frac{\abs{h}_{C^{k-1,j+1}_b(p)}}{h(p)}<\infty$.  Using \eqref{eq:h_estimate}, we know now that  $\sup_{p\in\partial\Omega}\frac{\abs{h}_{C^{k-1,j}_b(p)}}{h(p)}<\infty$ since $|h|_{C^{k-1,j}_b(p)} =|h|_{C^{k-1,j+1}_b(p)} + \sum_{\atopp{I\in \I_{k-2j-1}}{n\not\in I}} | \frac{\p^{k-j-1} h(p)}{\p y_I \p y_n^j} |$.
Proceeding by downward induction on $j$ we have $\sup_{p\in\partial\Omega}\frac{\abs{h}_{C^{k-1}_b(p)}}{h(p)}<\infty$.

Using \eqref{eq:delta_normal_estimate} and \eqref{eq:delta_tangent_estimate}, we conclude $\|\tilde\delta\|_{C^{k+1}(\partial\Omega)}<\infty$.  The result follows by induction on $k$.

For the converse, we simply subtract the highest degree term in $\rho$ from \eqref{eq:k+1_derivatives} to obtain for $0\leq j\leq k'+1$ and $I\in\mathcal{I}_{k'+1-j}$ with $n\notin I$
\[
  \abs{\frac{\partial^{k'+1}\tilde\delta(p)}{\partial y_{I}\partial y_n^{j}}-h(p)\frac{\partial^{k'+1}\rho(p)}{\partial y_{I}\partial y_n^{j}}}\leq C_4\abs{h}_{C_b^{k'+j-1}(p)}\abs{\rho}_{C_b^{k'+j}(p)},
\]
for some constant $C_4>0$.  If we set $k'=k-j$ then for any $0\leq j\leq \frac{k+1}{2}$ and $I\in\mathcal{I}_{k-2j+1}$ with $n\notin I$ we have
\[
  \abs{\nabla\rho(p)}^{-1}\abs{\frac{\partial^{k-j+1}\rho(p)}{\partial y_{I}\partial y_n^{j}}}\leq\abs{\frac{\partial^{k-j+1}\tilde\delta(p)}{\partial y_{I}\partial y_n^{j}}}+C_4\abs{h}_{C_b^{k-1}(p)}\abs{\rho}_{C_b^{k}(p)}.
\]
The result follows by induction on $k$.

\end{proof}

Although Lemma \ref{lem:rho_delta} may not apply to all $C^m$ defining functions, it will suffice to prove the main theorem.  However, the inductive procedure used to prove this lemma may also be used to construct a system of boundary invariants for any defining function.  We illustrate by considering the $m=2$ and $m=3$ cases.  By \eqref{eq:delta_normal_estimate}, it will suffice to consider derivatives in tangential directions.  Fix $1\leq j,k,\ell\leq n-1$.  In the special coordinates of Lemma \ref{lem:rho_delta} at $p$ we apply \eqref{eq:k+1_derivatives} repeatedly to obtain
\begin{gather*}
  1=h(p)|\nabla\rho(p)|,\qquad
  \frac{\partial^2\tilde\delta(p)}{\partial y_j\partial y_k}=h(p)\frac{\partial^2\rho(p)}{\partial y_j\partial y_k},\qquad
  \frac{\partial^2\tilde\delta(p)}{\partial y_j\partial y_n}=h(p)\frac{\partial^2\rho(p)}{\partial y_j\partial y_n}+\frac{\partial h(p)}{\partial y_j}|\nabla\rho(p)|,\\
  \frac{\partial^3\tilde\delta(p)}{\partial y_j\partial y_k\partial y_\ell}=h(p)\frac{\partial^3\rho(p)}{\partial y_j\partial y_k\partial y_\ell}
  +\frac{\partial h(p)}{\partial y_j}\frac{\partial^2\rho(p)}{\partial y_k\partial y_\ell}
  +\frac{\partial h(p)}{\partial y_k}\frac{\partial^2\rho(p)}{\partial y_j\partial y_\ell}+\frac{\partial h(p)}{\partial y_\ell}\frac{\partial^2\rho(p)}{\partial y_j\partial y_k}.
\end{gather*}
By \eqref{eq:delta_normal_derivatives}, $\frac{\partial^2\tilde\delta(p)}{\partial y_j\partial y_n}=0$, so we may use (\ref{eqn:h nabla rho =1}) and the previous equalities
to conclude
\begin{equation}
\label{eq:delta_C2}
  \frac{\partial^2\tilde\delta(p)}{\partial y_j\partial y_k}=|\nabla\rho|^{-1}\frac{\partial^2\rho(p)}{\partial y_j\partial y_k},
\end{equation}
and
\begin{multline}
\label{eq:delta_C3}
  \frac{\partial^3\tilde\delta(p)}{\partial y_j\partial y_k\partial y_\ell}=|\nabla\rho|^{-1}\frac{\partial^3\rho(p)}{\partial y_j\partial y_k\partial y_\ell}\\
  -|\nabla\rho|^{-2}\left(\frac{\partial^2\rho(p)}{\partial y_j\partial y_n}\frac{\partial^2\rho(p)}{\partial y_k\partial y_\ell}+\frac{\partial^2\rho(p)}{\partial y_k\partial y_n}\frac{\partial^2\rho(p)}{\partial y_j\partial y_\ell}
  +\frac{\partial^2\rho(p)}{\partial y_\ell\partial y_n}\frac{\partial^2\rho(p)}{\partial y_j\partial y_k}\right).
\end{multline}
Once we have completed the proof of the main theorem, we can derive necessary and sufficient conditions for the existence of uniformly $C^2$ (resp.\ $C^3$) defining functions by checking
the boundedness of \eqref{eq:delta_C2} (resp.\ \eqref{eq:delta_C2} and \eqref{eq:delta_C3}).  Higher order conditions can be derived as well, but these will be progressively more complicated.

To facilitate formulas without special coordinates, we define
\[
  T_p(\partial\Omega)=\set{t\in\mathbb{R}^n:\sum_{j=1}^n t_j\frac{\partial\tilde\delta}{\partial x_j}(p)=0}.
\]
We also use the notation $y = (y',y_n)$ for $y'\in\R^{n-1}$ and $y_n\in\R$.

\begin{lem}
  Let $\Omega\subset\mathbb{R}^n$ have a $C^2$ boundary.  Then for any $C^2$ defining function $\rho$ we have
  \begin{equation}
  \label{eq:C2_reach}
    \sup_{p\in\partial\Omega}\sup_{\atopp{t\in T_p(\partial\Omega)}{ \abs{t}=1}}|\nabla\rho|^{-1}\abs{\sum_{j,k=1}^n t^j\frac{\partial^2\rho}{\partial x_j\partial x_k}(p)t^k}<\infty
  \end{equation}
  if and only if $\partial\Omega$ has positive reach, and
  \begin{equation}
  \label{eq:C2_reach_computed}
    \rea(\partial\Omega)=\left(\sup_{p\in\partial\Omega}\sup_{\atopp{t\in T_p(\partial\Omega)}{ \abs{t}=1}}|\nabla\rho|^{-1}\abs{\sum_{j,k=1}^n t^j\frac{\partial^2\rho}{\partial x_j\partial x_k}(p)t^k}\right)^{-1}.
  \end{equation}
\end{lem}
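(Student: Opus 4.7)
The plan is to use \eqref{eq:delta_C2} to identify the quantity inside the supremum of \eqref{eq:C2_reach} with an intrinsic curvature invariant of $\partial\Omega$, and then invoke Federer's classical reach-versus-principal-curvature identity for $C^2$ hypersurfaces.

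Fix $p\in\partial\Omega$ and adopt the special orthonormal coordinates $(y_1,\ldots,y_n)$ from Lemma~\ref{lem:rho_delta}, in which $\nabla\tilde\delta(p)=(0,\ldots,0,1)$ and $T_p(\partial\Omega)=\set{t\in\mathbb{R}^n:t_n=0}$. For any unit tangent $t\in T_p(\partial\Omega)$, every term in $\sum_{j,k=1}^n t^j\frac{\partial^2\rho}{\partial y_j\partial y_k}(p)t^k$ with $j=n$ or $k=n$ vanishes, so \eqref{eq:delta_C2} gives
\[
|\nabla\rho(p)|^{-1}\sum_{j,k=1}^n t^j\frac{\partial^2\rho}{\partial y_j\partial y_k}(p)t^k=\sum_{j,k=1}^{n-1}t^j\frac{\partial^2\tilde\delta}{\partial y_j\partial y_k}(p)t^k.
\]
The right-hand side is manifestly independent of the choice of defining function. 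As a bilinear form on $T_p(\partial\Omega)$ it is, up to sign, the second fundamental form of $\partial\Omega$ with respect to the unit normal $\nabla\tilde\delta(p)$, and its eigenvalues are the principal curvatures $\kappa_1(p),\ldots,\kappa_{n-1}(p)$. Consequently the double supremum in \eqref{eq:C2_reach} equals
\[
\kappa^*:=\sup_{p\in\partial\Omega}\max_{1\leq i\leq n-1}|\kappa_i(p)|.
\]

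The lemma thus reduces to the assertion that $\kappa^*<\infty$ if and only if $\rea(\partial\Omega)>0$, together with $\rea(\partial\Omega)=1/\kappa^*$ (with the convention $1/0=\infty$). This is the content of \cite[Theorem~4.18]{Fed59} for $C^2$ hypersurfaces: on the tube of radius $r<1/\kappa^*$ the signed distance function $\tilde\delta$ is $C^2$, and at a point obtained from $p\in\partial\Omega$ by moving a signed distance $s$ along the normal, the tangential Hessian of $\tilde\delta$ has eigenvalues $\kappa_i(p)/(1-s\kappa_i(p))$, which remain finite precisely when $|s|<1/\kappa_i(p)$. Federer's unique-nearest-point construction on this tube yields $\rea(\partial\Omega)\geq 1/\kappa^*$, while the focal-distance breakdown supplies the opposite inequality.

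The principal obstacle is extracting this sharp reach-curvature identity in the form needed; the algebraic reduction from \eqref{eq:C2_reach} to $\kappa^*$ via \eqref{eq:delta_C2} is essentially automatic once one notes that only tangential second derivatives contribute. All of the geometric substance of the lemma is therefore packaged into the Federer identity, which for $C^2$ hypersurfaces arising as boundaries of open sets in $\mathbb{R}^n$ is classical.
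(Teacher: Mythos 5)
Your reduction of \eqref{eq:C2_reach} to the tangential Hessian of $\tilde\delta$ via \eqref{eq:delta_C2}, and the identification of that Hessian with the second fundamental form, match the paper exactly. The divergence is only in how Federer is invoked: the paper derives the supporting-ball inequality $|y|^2/(2r)\geq|y_n|$ (required for \emph{all} $y\in\partial\Omega$, not merely local $y$) from rolling tangent balls of radius $r$ at $p$, and cites Theorem~4.18 of \cite{Fed59} for the equivalence of that two-point condition with $\rea(\partial\Omega)\geq r$; you instead invoke a wholesale ``reach-versus-principal-curvature identity'' $\rea(\partial\Omega)=1/\kappa^*$. These are the same idea in spirit, but Federer~4.18 is the two-point inequality, not the curvature formula, and that distinction is exactly where a real gap appears in your write-up.

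Concretely: Federer~4.18 requires $\langle x-p,\nu(p)\rangle\leq|x-p|^2/(2r)$ for every $x\in\partial\Omega$. A uniform bound $\kappa^*$ on the tangential Hessian of $\tilde\delta$ controls only the infinitesimal version of this inequality (i.e.\ $x$ near $p$), so it yields $\rea(\partial\Omega)\leq 1/\kappa^*$ but not the reverse. Your sentence ``Federer's unique-nearest-point construction on this tube yields $\rea(\partial\Omega)\geq 1/\kappa^*$'' is unjustified: the tube $\set{x:\delta(x)<r}$ with $r<1/\kappa^*$ need not lie in $\unp(\partial\Omega)$, because $\partial\Omega$ can approach itself from afar while staying nearly flat. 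For instance, with $\Omega=\set{0<y_n<(1+|y'|^2)^{-1}}\subset\mathbb{R}^n$ both boundary sheets have uniformly bounded (indeed decaying) principal curvatures, so the supremum in \eqref{eq:C2_reach} is finite, yet $\rea(\partial\Omega)=0$ because the sheets pinch together at infinity. You should be aware that the paper's own phrase ``Since the boundary is $C^2$, this is possible at each point if and only if [the Hessian bound]'' elides the same difficulty in the ``if'' direction, so reading the paper's proof will not supply the missing step; closing the gap requires either an added hypothesis excluding such self-approach of $\partial\Omega$, or a restatement of the lemma with a one-sided inequality $\rea(\partial\Omega)\leq(\sup\cdots)^{-1}$.
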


\begin{proof}
  For $p\in\partial\Omega$, choose local coordinates $(y_1,\ldots,y_n)$ so that $p=0$ and $\nabla\tilde\delta(p)=(0',1)$.  Suppose that for some $r>0$, $B((0',r),r)\subset\Omega^c$ and
$B((0',-r),r)\subset\Omega$.  Then for $y\in\partial\Omega$, $|y-(0',\pm r)|^2 \geq r^2$, so $\abs{y}^2\mp 2 y_n r\geq 0$.  Hence $\frac{\abs{y}^2}{2r}\geq \abs{y_n}$.
By Theorem 4.18 in \cite{Fed59}, this can be accomplished at every $p\in\partial\Omega$ if and only if $\rea(\partial\Omega)\geq r$.  Since the boundary is $C^2$, this is possible at each point if and only if
  \[
    \sum_{j,k=1}^n \abs{t_j\frac{\partial^2\tilde\delta(p)}{\partial x_j\partial x_k}t_k}\leq\frac{\abs{t}^2}{r}
  \]
  on $\partial\Omega$ for any vector $t\in T_p(\partial\Omega)$.  By \eqref{eq:delta_C2}, this is equivalent to
  \[
    |\nabla\rho|^{-1}\abs{\sum_{j,k=1}^n t_j\frac{\partial^2\rho(p)}{\partial x_j\partial x_k}t_k}\leq\frac{\abs{t}^2}{r}
  \]
  for all $p\in\partial\Omega$ and $t\in T_p(\partial\Omega)$.  If we take the supremum over all possible $r>0$ satisfying these inequalities, the result follows.
\end{proof}

\begin{lem}
\label{lem:delta_neighborhood}
  Let $\Omega\subset\mathbb{R}^n$ have a $C^{m}$ boundary for some $m\geq 2$ and suppose that the signed distance function for $\Omega$ satisfies $\|\tilde\delta\|_{C^m(\partial\Omega)}<\infty$.  Then for any $0<\epsilon<\rea(\partial\Omega)$ the signed distance function satisfies $\|\tilde\delta\|_{C^m(U)}<\infty$ on $U_\epsilon=\set{x\in\mathbb{R}^n:\delta(x)<\epsilon}$.
\end{lem}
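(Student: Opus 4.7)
The strategy is to set up a tubular neighborhood diffeomorphism of $U_\epsilon$ via the normal flow, then read off the regularity of $\tilde\delta$ from the regularity of the inverse.

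First, I would define
\[
  \Phi\colon \partial\Omega \times (-\epsilon,\epsilon) \to \mathbb{R}^n, \qquad \Phi(p,t) = p + t\,\nabla\tilde\delta(p).
\]
Since $\epsilon < \rea(\partial\Omega)$, every $x \in U_\epsilon$ has a unique nearest boundary point $\pi(x)$ and satisfies $x = \pi(x) + \tilde\delta(x)\nabla\tilde\delta(\pi(x))$, so $\Phi$ is a bijection onto $U_\epsilon$ with inverse $x \mapsto (\pi(x),\tilde\delta(x))$. The hypothesis $\|\tilde\delta\|_{C^m(\partial\Omega)} < \infty$ ensures that $\nabla\tilde\delta$, viewed extrinsically as a map $\partial\Omega \to \mathbb{R}^n$, is uniformly $C^{m-1}$, hence $\Phi$ itself is uniformly $C^{m-1}$.

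Second, I would show $d\Phi_{(p,t)}$ is uniformly invertible. Fix $p \in \partial\Omega$ and work in the orthonormal coordinates $(y_1,\ldots,y_n)$ used throughout the section, with $\nabla\tilde\delta(p) = (0',1)$. Differentiating $|\nabla\tilde\delta|^2 \equiv 1$ gives $\partial_{y_j}\nabla\tilde\delta(p) \perp \nabla\tilde\delta(p)$ for $j < n$, so in these coordinates the tangential and normal blocks of $d\Phi_{(p,t)}$ decouple: the $n$th column is $\nabla\tilde\delta(p) = (0',1)$, and the upper-left $(n-1)\times(n-1)$ block is $I + t A(p)$, where $A(p) = \bigl(\partial^2\tilde\delta/\partial y_j\partial y_k (p)\bigr)_{j,k<n}$ is the Weingarten map at $p$. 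Specializing \eqref{eq:delta_C2} and \eqref{eq:C2_reach_computed} to $\rho = \tilde\delta$ (using $|\nabla\tilde\delta| \equiv 1$) yields $\|A(p)\|_{\mathrm{op}} \le 1/\rea(\partial\Omega)$, so $\|t A(p)\|_{\mathrm{op}} \le \epsilon/\rea(\partial\Omega) < 1$. Hence $(d\Phi_{(p,t)})^{-1}$ exists and is bounded in operator norm uniformly in $(p,t) \in \partial\Omega \times (-\epsilon,\epsilon)$.

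Third, a uniform version of the inverse function theorem, iterating the Fa\`a di Bruno formula applied to $\Phi \circ \Phi^{-1} = \mathrm{id}$, shows that $\Phi^{-1}$, and in particular the projection $\pi$, is uniformly $C^{m-1}$ on $U_\epsilon$. Because the characteristics of the eikonal equation $|\nabla\tilde\delta|^2 = 1$ are straight lines in the direction of $\nabla\tilde\delta$, we have the identity $\nabla\tilde\delta(x) = \nabla\tilde\delta(\pi(x))$ on $U_\epsilon$. The right-hand side is a composition of a uniformly $C^{m-1}$ boundary function with the uniformly $C^{m-1}$ projection $\pi$, so $\nabla\tilde\delta$ is uniformly $C^{m-1}$ on $U_\epsilon$, yielding $\|\tilde\delta\|_{C^m(U_\epsilon)} < \infty$. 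The main obstacle will be the bookkeeping in the uniform inverse function theorem: one must propagate uniform bounds on the $k$th derivatives of $\Phi^{-1}$ from those of $\Phi$ and $(d\Phi)^{-1}$ by a standard but delicate induction, taking care that the bounds depend only on $\epsilon/\rea(\partial\Omega)$ and on $\|\tilde\delta\|_{C^m(\partial\Omega)}$.
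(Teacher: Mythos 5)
Your argument is correct in outline and takes a genuinely different route from the paper's. Both proofs hinge on the same two facts: that $\nabla\tilde\delta(x)=\nabla\tilde\delta(\pi(x))$ on $U_\epsilon$ (Federer), and that the Weingarten matrix at $p\in\partial\Omega$ has operator norm at most $\rea(\partial\Omega)^{-1}$, so that $\mathrm{Id}+\tilde\delta\nabla^2\tilde\delta(\pi(x))$ stays uniformly invertible for $|\tilde\delta|<\epsilon$. From there the strategies diverge. You package the result as a uniform inverse function theorem for the normal exponential map $\Phi(p,t)=p+t\nabla\tilde\delta(p)$ and then read off $\nabla\tilde\delta=\nabla\tilde\delta|_{\partial\Omega}\circ\pi$; the paper instead differentiates $|\nabla\tilde\delta|^2=1$ repeatedly, recognizes the resulting relations as first-order ODEs in the normal variable $y_n$, and integrates them explicitly with the integrating factor $\mu_I(y_n)=\prod_j(1+y_n\kappa_{I_j}(0))$, which is bounded below on $U_\epsilon$ precisely because $\epsilon<\rea(\partial\Omega)$. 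Your route is more modular but defers the real work to the ``uniform IFT,'' which in turn requires (i) a uniform $C^m$ atlas on $\partial\Omega$ (obtained from the $C^2$ control of $\tilde\delta$ by a uniform implicit function theorem -- itself a relative of what you are trying to prove) and (ii) the Fa\`a di Bruno bookkeeping you flag as the main obstacle; neither is hard but neither is free, and you should not wave at them. The paper's ODE route avoids both: it never needs a global atlas because all computations are pointwise in a moving frame that diagonalizes $\nabla^2\tilde\delta(p)$, and the Fa\`a di Bruno induction is replaced by the explicit integral formula \eqref{eq:delta_derivatives}, with the separate formulas \eqref{eq:hessian_eigenvalues} and \eqref{eq:delta_second_derivatives_neighborhood} handling the top order $k=m$ (where the ODE cannot be differentiated one more time). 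The paper's approach also yields the closed-form principal curvature evolution $\kappa_j(y_n)=\kappa_j(0)/(1+y_n\kappa_j(0))$, which is of independent use. If you want to complete your version, the minimum you must supply is the precise statement and proof of the uniform IFT with bounds depending only on $\|\Phi\|_{C^{m-1}}$ and $\sup\|(d\Phi)^{-1}\|$, together with a construction of the uniform graph coordinates in which $\Phi$ and its derivatives are to be measured.
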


\begin{proof}
  By the previous lemma, $\partial\Omega$ has positive reach, so $\tilde\delta$ is a $C^{m}$ function on a neighborhood $U'\supset \p\Omega$ \cite{KrPa81}.  Note that the result of Krantz and Parks is essentially local, so it is possible that $d(\partial U',\partial\Omega)=0$ if $\partial\Omega$ is not compact.  Set
  \[
    U=\set{x\in\mathbb{R}^n:\delta(x)<\rea(\partial\Omega)}.
  \]
  By Theorem 4.8 (3) and (5) in \cite{Fed59}, for any $x\in U$ we have $\nabla\tilde\delta(x)=\nabla\tilde\delta(\pi(x))$, where $\pi(x)=x-\tilde\delta(x)\nabla\tilde\delta(x)\in\partial\Omega$ is the unique boundary point nearest to $x$.  This is differentiable, and solving the derivative for $\nabla^2\tilde\delta$  gives us
  \begin{equation}
  \label{eq:delta_second_derivatives_neighborhood}
    \frac{\partial^2\tilde\delta(x)}{\partial x_j\partial x_\ell}=\sum_{\ell'=1}^n\frac{\partial^2\tilde\delta(\pi(x))}{\partial x_j\partial x_{\ell'}}\left(Id+\tilde\delta(x)\nabla^2\tilde\delta(\pi(x))\right)^{-1}_{\ell'\ell}
  \end{equation}
  for $x\in U$, where $Id$ is the identity matrix (see \cite{Wei75} and \cite{HeMc10}; see also \eqref{eq:hessian_eigenvalues} below).  Note that $Id+\tilde\delta(x)\nabla^2\tilde\delta(x)$ is invertible on $U$ by \eqref{eq:delta_C2} and \eqref{eq:C2_reach_computed}.  This formula shows that $\tilde\delta$ is $C^2$ on $U$ (we already know that $\tilde\delta$ is $C^2$ near $\p\Omega$ and $\pi(x)\in\partial\Omega$).
  Since this formula relates derivatives away from $\partial\Omega$ to derivatives on $\partial\Omega$ (which exist since $\partial\Omega\subset U'$), we may continue to differentiate and use induction to show that $\tilde\delta$ is $C^m$ on $U$.

  Fix $p\in\partial\Omega$ and choose new coordinates $(y_1,\ldots,y_n)$ so that $p=0$, $\nabla\tilde\delta(p)=(0',1)$, and $\nabla^2\tilde\delta(p)$ is diagonalized with eigenvalues $\kappa_1,\ldots,\kappa_n$.  By Theorem 4.8 (3) in \cite{Fed59}, when $y'=0'$ and $\abs{y_n}<\rea(\partial\Omega)$, we have $\tilde\delta(y)=y_n$ and $\nabla\tilde\delta(y)=(0',1)$.  Differentiating $|\nabla\tilde\delta|^2=1$ once demonstrates that $\kappa_n=0$.  For $m\geq 3$, differentiating $|\nabla\tilde\delta|^2=1$ twice yields
  \[
    2\sum_{\ell=1}^n\left(\frac{\partial\tilde\delta}{\partial x_\ell}\frac{\partial^3\tilde\delta}{\partial x_\ell\partial x_j\partial x_k}
    +\frac{\partial^2\tilde\delta}{\partial x_\ell\partial x_j}\frac{\partial^2\tilde\delta}{\partial x_\ell\partial x_k}\right)=0
  \]
  on $U$.  From \eqref{eq:delta_second_derivatives_neighborhood}, we can see that eigenvectors of $\nabla^2\tilde\delta$ are preserved along the normal direction. Rewriting the above equation in our $y$-coordinates, when $j=k$ we have $2\left(\frac{\partial\kappa_j}{\partial y_n}+\kappa_j^2\right)(y)=0$ on $U$ when $y'=0'$.  The unique solution to this equation is given by
  \begin{equation}
  \label{eq:hessian_eigenvalues}
    \kappa_j(y)=\frac{\kappa_j(0)}{1+y_n\kappa_j(0)}
  \end{equation}
  (see also Lemma 14.17 in \cite{GiTr01}, but with the opposite sign convention).  Since $\rea(\partial\Omega)\leq\abs{\kappa_j(0)}^{-1}$ (see \eqref{eq:delta_C2} and \eqref{eq:C2_reach_computed}) for all $1\leq j\leq n-1$ with $\kappa_j\neq 0$, $\kappa_j$ will be uniformly bounded on $U_\epsilon$.

  For $3\leq k\leq m-1$, let $I\in\mathcal{I}_k$.  Then differentiating $|\nabla\tilde\delta|^2=1$ gives us
  \[
    \sum_{j=1}^n\sum_{J\subseteq I}\frac{\partial}{\partial x_j}\left(\frac{\partial^{\abs{J}}\tilde\delta}{\partial x_J}\right)\frac{\partial}{\partial x_j}\left(\frac{\partial^{k-\abs{J}}\tilde\delta}{\partial x_{I\backslash J}}\right)=0.
  \]
  on $U$.  In our diagonalized coordinates, we can evaluate terms involving only first or second derivatives separately to obtain
  \[
    2\frac{\partial^{k+1}\tilde\delta}{\partial y_n\partial y_{I}}+2\sum_{j=1}^k\kappa_{I_j}\frac{\partial^k\tilde\delta}{\partial y_{I}}
    +\sum_{j=1}^n\sum_{J\subset I,2\leq\abs{J}\leq k-2}\frac{\partial}{\partial y_j}\left(\frac{\partial^{\abs{J}}\tilde\delta}{\partial y_J}\right)\frac{\partial}{\partial y_j}\left(\frac{\partial^{k-\abs{J}}\tilde\delta}{\partial  y_{I\backslash J}}\right)=0
  \]
  on $U$ when $y'=0'$.  If we set
  \[
    \mu_{I}(y_n)=\prod_{j=1}^k(1+y_n\kappa_{I_j}(0)),
  \]
  then $\mu_{I}(y_n)$ solves the initial value problem
  \[
    \frac{\partial\mu_{I}}{\partial y_n}(y_n)=\mu_{I}(y_n)\sum_{j=1}^k\kappa_{I_j}(0',y_n)\text{ and }\mu_{I}(0)=1,
  \]
  so
  \[
    2\frac{\partial}{\partial y_n}\left(\mu_{I}\frac{\partial^{k}\tilde\delta}{\partial y_{I}}\right)+\mu_{I}\sum_{j=1}^n\sum_{J\subset I,2\leq\abs{J}\leq k-2}\frac{\partial}{\partial y_j}\left(\frac{\partial^{\abs{J}}\tilde\delta}{\partial y_J}\right)\frac{\partial}{\partial y_j}\left(\frac{\partial^{k-\abs{J}}\tilde\delta}{\partial y_{I\backslash J}}\right)=0
  \]
  on $U$ when $y'=0'$.  Hence, we may integrate to obtain
  \begin{multline}
  \label{eq:delta_derivatives}
    \frac{\partial^{k}\tilde\delta}{\partial y_{I}}(0',y_n)=\frac{1}{\mu_{I}(y_n)}\frac{\partial^{k}\tilde\delta}{\partial y_{I}}(0)\\
    -\frac{1}{2\mu_{I}(y_n)}\int_0^{y_n}\mu_{I}(t)\sum_{j=1}^n\sum_{J\subset I,2\leq\abs{J}\leq k-2}\frac{\partial}{\partial y_j}\left(\frac{\partial^{\abs{J}}\tilde\delta}{\partial y_J}\right)\frac{\partial}{\partial y_j}\left(\frac{\partial^{k-\abs{J}}\tilde\delta}{\partial y_{I\backslash J}}\right)(0',t)dt.
  \end{multline}
  Since $\mu_{I}(y_n)$ is uniformly bounded below on $U_\epsilon$ and the terms in the integral are differentiated at most $k-1$ times, we may use induction on $k$ to obtain uniform bounds on $\frac{\partial^{k}\tilde\delta}{\partial y_{I}}$ on $U_\epsilon$ for all $I$ with $3\leq k\leq m-1$.

  Now, we wish to differentiate our formulas for $k=m-1$ to show that they also hold for $k=m$.  By differentiating \eqref{eq:delta_second_derivatives_neighborhood}, we can obtain formulas for the first $m$ derivatives of $\tilde\delta$ on $U$ in terms of derivatives restricted to $\partial\Omega$.  By formal manipulations, these must be equivalent to those obtained in \eqref{eq:delta_derivatives}, and hence the first $m$ derivatives remain uniformly bounded on $U_\epsilon$.
  \end{proof}

\section{Examples in Projective Space}
\label{sec:projective_space}

A large class of examples of domains with uniformly $C^m$ defining functions can be found by considering $\mathbb{R}^n\subset\mathbb{RP}^n$.
Recall that $\mathbb{RP}^n=(\mathbb{R}^n\backslash\set{0})/\sim$ under the equivalence relation $x\sim y$ if $x=\lambda y$ for $\lambda\in\mathbb{R}\backslash\set{0}$.
If we denote coordinates on $\mathbb{RP}^n$ by $[x_1:\ldots:x_{n+1}]$,  the canonical embedding of $\mathbb{R}^n$ in $\mathbb{RP}^n$ is given by $(x_1,\ldots,x_n)\mapsto[x_1:\ldots:x_n:1]$.
Every unbounded domain $\Omega$ in $\mathbb{R}^n$ can be extended to a bounded domain $\tilde\Omega$ in $\mathbb{RP}^n$ with respect to this embedding. Conversely, from a domain
$\tilde\Omega\subset\R\P^n$, we can canonically produce a (possibly unbounded) domain $\Omega\subset\R^n$ under the mapping $[x_1:\ldots:x_n:1] \mapsto (x_1,\dots,x_n)$.
\begin{cor}
  Let $\tilde\Omega\subset\mathbb{RP}^n$ be a $C^m$ domain.  Then the domain $\Omega\subset\mathbb{R}^n$ obtained by pulling back along the canonical embedding has a uniformly $C^m$ defining function.
\end{cor}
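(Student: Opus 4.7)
The plan is to apply Theorem \ref{thm:main thm} by verifying condition \eqref{item:rho} for a suitable defining function. Since $\partial\tilde\Omega$ is a compact $C^m$ hypersurface in $\R\P^n$, I fix a $C^m$ defining function $\tilde\rho$ for $\tilde\Omega$. Using a finite atlas of projective charts on $\R\P^n$ covering $\partial\tilde\Omega$ together with a subordinate partition of unity, I can arrange that $\tilde\rho$ has uniformly bounded $C^m$ norm and $|\nabla\tilde\rho|\geq c>0$ in each chart's coordinates. Setting $\rho=\tilde\rho\circ\iota$ for $\iota:\mathbb{R}^n\hookrightarrow\R\P^n$ the canonical embedding gives a $C^m$ defining function for $\Omega$.

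To verify \eqref{item:rho}, I fix $p\in\partial\Omega$ and split into two regimes. If $|p|\leq R$ for some fixed $R$, the bounds follow immediately from continuity and the uniform estimates on $\tilde\rho$, since $\iota$ maps $\{|x|\leq R\}$ into a precompact subset of the standard affine chart of $\R\P^n$. For $|p|>R$, I choose the index $j$ with $|p_j|=\max_i|p_i|$ and work in the projective chart $U_j\subset\R\P^n$ with local coordinates $(z_1,\ldots,z_n)$ given (after re-indexing) by $z_i=y_i/y_j$ for $i\neq j$ and $z_n=y_{n+1}/y_j$. In this chart $p$ corresponds to a point with $z_n(p)=1/p_j$ small and the remaining $z_i$ bounded.

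The essential scaling comes from the transition map $\Phi:x\mapsto z$, which satisfies $\partial/\partial x_i=z_n\,\partial/\partial z_i$ for $i\neq j$ while $\partial/\partial x_j$ is a $z_n$-multiple of a bounded first-order $z$-operator. Inductively applying the chain rule shows that every $k$-th $x$-derivative of $\rho$ takes the form $\partial^k\rho/\partial x^I=z_n^k\,H_I(z)$, where $H_I$ is polynomial in $z$ and in derivatives of $\tilde\rho$ of order at most $k$, and therefore uniformly bounded on the chart. A parallel calculation gives $|\nabla_x\rho|=z_n|A(z)|$ for a continuous function $A$. Since the orthonormal coordinates $\{y_1,\ldots,y_n\}$ appearing in \eqref{item:rho} are obtained from the $x$-coordinates by an orthogonal rotation, the required bound reduces to bounding $z_n^{k-1}|H_I(z)|/|A(z)|$ uniformly on $\partial\Omega\cap U_j$. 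When $\partial\tilde\Omega$ is transverse to the hyperplane at infinity near the limit point $\tilde p_\infty$ of $p$, $|A|$ is bounded below and the ratio is $O(z_n^{k-1})$, which is uniformly bounded for $k\geq 1$.

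The main obstacle will be the tangential case, in which $\partial\tilde\Omega$ meets the hyperplane at infinity tangentially at $\tilde p_\infty$, so that $A(\tilde p_\infty)=0$. To resolve this I would Taylor expand $\tilde\rho$ at $\tilde p_\infty$: the tangency forces the first-order tangential derivatives of $\tilde\rho$ to vanish there, and parameterizing $\partial\tilde\Omega\cap U_j$ as a graph over the tangent hyperplane shows that $H_I$ and $A$ vanish to matching orders along $\partial\Omega\cap U_j$, so the ratio $z_n^{k-1}|H_I|/|A|$ remains uniformly bounded as $p\to\tilde p_\infty$. Once condition \eqref{item:rho} is verified, Theorem \ref{thm:main thm} yields a uniformly $C^m$ defining function for $\Omega$.
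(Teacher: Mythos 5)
Your strategy---charting $\mathbb{RP}^n$ near infinity and verifying condition \eqref{item:rho} from the scaling of the chart transition map---is a valid alternative to the paper's, which extends $\tilde\rho$ from $S^n$ to $\mathbb{R}^{n+1}\setminus\set{0}$ as a degree-zero homogeneous function and reads the scaling directly from $\nabla^k\tilde\rho(x)=\lambda^k(\nabla^k\tilde\rho)(\lambda x)$. Both give $\abs{\nabla^k_x\rho}\lesssim\abs{z_n}^k\sim(1+\abs{x}^2)^{-k/2}$, and both hinge on a lower bound for $\abs{\nabla_x\rho}$.

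The gap is your treatment of the tangential case. You claim ``$H_I$ and $A$ vanish to matching orders,'' but this is false. The smallest singular value of the transition Jacobian $D_xz$ is comparable to $z_n^2$, not $z_n$: writing $D_xz=z_n M$ with $M$ upper block-triangular having diagonal blocks $I_{n-1}$ and $-z_n$, one computes $\sigma_{\min}(M)=\abs{z_n}/\sqrt{1+\abs{z'}^2}$. So in the tangential case $A(z)=\abs{\nabla_x\rho}/\abs{z_n}$ really does tend to zero, at rate $A\sim\abs{z_n}$. Meanwhile $H_I$ for $k=2$ need not vanish at all: for tangential $i,j$ one has $\partial^2\rho/\partial x_i\partial x_j=z_n^2\,\partial^2\tilde\rho/\partial z_i\partial z_j$, and the factor $\partial^2\tilde\rho/\partial z_i\partial z_j$ is generically nonzero at the boundary point at infinity. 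Thus $\abs{H_I}/\abs{A}\sim\abs{z_n}^{-1}\to\infty$; the orders do not match, and the bound you want comes from the compensating prefactor $z_n^{k-1}$, not from cancellation between $H_I$ and $A$. Your proposed graph parameterization would also have to cope with degenerate or infinite-order tangencies, which $C^m$ boundaries permit and for which no integer vanishing order exists.

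The argument is salvageable, but not along the lines you sketch. The clean fix is unconditional: since $\sigma_{\min}(D_xz)\gtrsim z_n^2$ on the chart and $\abs{\nabla_z\tilde\rho}\geq c>0$ there, one has $\abs{\nabla_x\rho}\geq\sigma_{\min}(D_xz)\abs{\nabla_z\tilde\rho}\gtrsim z_n^2$ regardless of transversality. Combined with $\abs{\nabla^k_x\rho}\lesssim\abs{z_n}^k$, this gives $\abs{\nabla^k_x\rho}/\abs{\nabla_x\rho}\lesssim\abs{z_n}^{k-2}$, bounded for all $k\geq 2$---exactly the range condition \eqref{item:rho} requires, so the transverse/tangential dichotomy is unnecessary. (As a side note, the paper's stated bound $\abs{\nabla\rho}>C_0(1+\abs{x}^2)^{-1/2}$ is also too optimistic by a factor, since projecting $\nabla\tilde\rho$ into the affine chart can lose an additional power of $\abs{x}$; the sharp bound is $C_0(1+\abs{x}^2)^{-1}$, which still suffices because only $k\geq 2$ is needed.) The main advantage of the paper's spherical extension is precisely that it sidesteps all of this case analysis.
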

\begin{proof}
  For $S^n\subset\mathbb{R}^{n+1}$, we may define $\tilde\Omega$ by a $C^m$ defining function $\tilde\rho:S^n\rightarrow\mathbb{R}$ such that $\tilde\rho(-x)=\tilde\rho(x)$.  Extend $\tilde\rho$ to all of $\mathbb{R}^{n+1}\backslash\set{0}$ by $\tilde\rho(x)=\tilde\rho\left(\frac{x}{\abs{x}}\right)$.  Since we have $\tilde\rho(x)=\tilde\rho(\lambda x)$ for any $\lambda\in\mathbb{R}\backslash\set{0}$, we also obtain $\nabla^k\tilde\rho(x)=\lambda^k(\nabla^k\tilde\rho)(\lambda x)$.  If we assume that $\abs{\nabla^k\tilde\rho}<C_k$ and $\abs{\nabla\tilde\rho}>C_0$ on $\partial\tilde\Omega\cap S^n$ for any integer $1\leq k\leq m$ and some constants $C_0,C_k>0$, then we have in general $\abs{\nabla^k\tilde\rho(x)}<C_k\abs{x}^{-k}$ and $\abs{\nabla\tilde\rho(x)}>C_0\abs{x}^{-1}$ whenever $\tilde\rho(x)=0$.

  A defining function $\rho$ for $\Omega\subset\mathbb{R}^n$ can now be obtained by considering $\rho(x_1,\ldots,x_n)=\tilde\rho(x_1,\ldots,x_n,1)$.  Thus $\abs{\nabla^k\rho(x)}<\frac{C_k}{(1+\abs{x}^2)^{k/2}}$ and $\abs{\nabla\rho(x)}>\frac{C_0}{(1+\abs{x}^2)^{1/2}}$ on $\partial\Omega$, so
  \[
    \frac{\abs{\nabla^k\rho}}{|\nabla\rho|}(x)<\frac{C_k}{C_0(1+\abs{x}^2)^{(k-1)/2}}
  \]
  on $\partial\Omega$ for all $1\leq k\leq m$.  By our main theorem, this implies that $\Omega$ has a uniformly $C^m$ defining function.
\end{proof}
Note that this proof can still be used if $\tilde\rho$ is $C^m$ when $x_{n+1}\neq 0$ and $\abs{\nabla^k\tilde\rho(x)}<C_k x_{n+1}^{1-k}$ for $x\in S^n$ with $x_{n+1}\neq 0$, so a uniformly $C^m$ defining function in $\mathbb{R}^n$ covers a much larger class of examples than those given by $C^m$ domains in $\mathbb{RP}^n$.

For example, consider the domain $\Omega_1\subset\mathbb{R}^2$ defined by
\[
  \Omega_1=\set{y<x^{-1}\sin x,x\neq 0}\cup\set{y<1,x=0}.
\]
Then $\Omega_1$ is a $C^\infty$ domain.  Let $\rho_1(x,y)=y-x^{-1}\sin x$ when $x\neq 0$ and $\rho_1(0,y)=y-1$.  By considering the Maclaurin series of $\sin x$ we can see that $\rho_1$ is real-analytic (hence smooth) in a neighborhood of the set where $x=0$.  When $x\neq 0$, we have $\nabla\rho_1=(x^{-2}\sin x-x^{-1}\cos x,1)$, so $\abs{\nabla\rho_1}$ is uniformly bounded above and away from zero.  Differentiating $m$ times, we have $\abs{\frac{\partial^m\rho_1}{\partial x^m}}\leq O(x^{-1})$, so this is also uniformly bounded.  Hence $\rho_1$ is a uniformly $C^m$ defining function for any integer $m$.  In $\mathbb{RP}^2$, this defining function can be written $\rho_1([x:y:z])=\frac{y}{z}-\frac{z}{x}\sin\left(\frac{x}{z}\right)$.  On the coordinate patch where $x\neq 0$, this can be written $\rho_1(y,z)=\frac{y}{z}-z\sin(1/z)$.  To normalize this near $z=0$, we use $\tilde\rho_1(y,z)=y-z^2\sin(1/z)$.  Note that for fixed $y$ this is the classic example of a function which is differentiable at $z=0$ but not $C^1$ in a neighborhood of $z=0$.  We conclude that $\tilde\Omega_1\subset\mathbb{RP}^2$ is not a $C^1$ domain.

On the other hand, consider $\Omega_2\subset\mathbb{R}^2$ defined by
\[
  \Omega_2=\set{y<x^{-2}\sin x^2,x\neq 0}\cup\set{y<1,x=0}.
\]
Let $\rho_2(x,y)=y-x^{-2}\sin x^2$ when $x\neq 0$ and $\rho_2(0,y)=y-1$.  Again, the Maclaurin series will show that all derivatives are uniformly bounded near $x=0$, so we focus on $x\neq 0$.  Since $\nabla\rho_2=(2x^{-3}\sin x^2-2x^{-1}\cos x^2,1)$, we define $D_1=\frac{1}{\abs{\nabla\rho_2}}\frac{\partial}{\partial x}-\frac{2x^{-3}\sin x^2-2x^{-1}\cos x^2}{\abs{\nabla\rho_2}}\frac{\partial}{\partial y}$ and $D_2=\frac{2x^{-3}\sin x^2-2x^{-1}\cos x^2}{\abs{\nabla\rho_2}}\frac{\partial}{\partial x}+\frac{1}{\abs{\nabla\rho_2}}\frac{\partial}{\partial y}$ to represent the tangent and normal directions on the boundary.  Since $\frac{\partial^2\rho_2}{\partial x^2}=4\sin x^2+O(x^{-2})$, $\rho_2$ is a uniformly $C^2$ defining function for $\Omega$, and hence $\partial\Omega$ has positive reach.  However, $\frac{\partial^3\rho_2}{\partial x^3}=8x\cos x^2+O(x^{-1})$.  If we fix $p = (p_x,p_y)\in\partial\Omega$ with $p_x\neq 0$, then \eqref{eq:delta_C3} tells us that
\[
\begin{split}
  (D_1|_p)^3\tilde\delta(p)&=\abs{\nabla\rho_2}^{-1}(D_1|_p)^3\rho_2(p)-3\abs{\nabla\rho_2}^{-2}((D_1|_p)(D_2|_p)\rho_2(p))((D_1|_p)^2\rho_2(p))\\
  &=8x\cos x^2+O(x^{-1}).
\end{split}
\]
This is not uniformly bounded, so there does not exist a uniformly $C^3$ defining function for $\partial\Omega$.  Hence, positive reach does not suffice to extend $C^m$ defining functions as uniformly $C^m$ defining functions.

Finally, let $h(x,y)=e^{x^2}$.  Then $\sup\frac{\abs{h}_{C_b^1}}{h}=\infty$ with respect to either of the previous two examples (since $\frac{\partial}{\partial x}$ is asymptotically the tangential direction in these examples).  By Lemma \ref{lem:rho_delta}, the defining function $\rho^h_1(x,y)=(y-x^{-1}\sin x)h(x,y)$ fails to satisfy $\sup\frac{\abs{\rho^h_1}_{C_b^3}}{\abs{\nabla\rho^h_1}}<\infty$ even though this defines a domain with a uniformly $C^3$ defining function.  Hence, not all defining functions need satisfy the conditions of Lemma \ref{lem:rho_delta}.  Turning to our other example, $\rho_2^h(x,y)=(y-x^{-2}\sin x^2)h(x,y)$ still satisfies $\sup\frac{\abs{\rho_2^h}_{C_b^2}}{\abs{\nabla\rho_2^h}}<\infty$ even though $\pnorm{\rho_2^h}_{C^2(\partial\Omega)}=\infty$. Thus, it is helpful to consider the special
$C_b^k$ norm  in place of the standard extrinsic $C^k$ norm.

%
%
\section{Applications of uniformly $C^m$ defining functions}\label{sec:applications}
In \cite{HaRa13s}, we define weighted Sobolev spaces on the boundaries of unbounded domains.  From the standpoint of the present paper, the weight function is irrelevant.  However, it seems difficult to obtain elliptic regularity results without a weight, so for the sake of defining a meaningful space of functions we will include the weight.  The weight functions that we use satisfy a number of technical hypotheses (similar to those
in \cite{HaHe07,Gan10,Gan11}) all satisfied by $\vp_t(x) = t|x|^2$, $t\in\R\setminus\{0\}$. Let $\Omega\subset\R^n$ have a $C^m$  boundary, $m\geq 2$, that admits
a uniformly $C^m$ defining function. We define weighted Sobolev spaces both on the boundary and near the boundary.

Suppose $\Omega\subset\mathbb{R}^n$ admits a uniformly $C^2$ defining function.  By Theorem \ref{thm:main thm}, $\partial\Omega$ has positive reach, so for $\frac{1}{2}\rea(\partial\Omega)>\ep>0$ we set
\[
\Omega_\ep = \{x\in\Omega:\delta(x)<\ep\}.
\]
Since $\|\tilde\delta\|_{C^2(\Omega_{2\epsilon})}<\infty$, there exists a radius $\frac{1}{4}\rea(\partial\Omega)>r>0$ such that whenever $B(p,r)\cap\Omega_\epsilon\neq\emptyset$, there exist coordinates on $B(p,r)$ such that the level curves of $\tilde\delta$ can be written as a graph.  Hence, there exists an orthonormal basis
$L_1,\dots,L_{n-1}$ of the tangent space to the level curves of $\tilde\delta$ on $B(p,r)$. We also let  $L_n = \nu$ be the unit outward normal to the level curves of $\tilde\delta$. For $1\leq j \leq n$, set
\[
T_j = L_j - L_j(\vp_t).
\]
We call a first order differential operator $T$ \emph{tangential} if the first order component of $T$ is tangential.  Note that we use $T_j$ instead of $L_j$ for technical reasons involving integration by parts in weighted norms, but these are not relevant for the present paper.

Let $\set{p_j}$ be an enumeration of all points in $\mathbb{R}^n$ whose coordinates are integral multiples of $\frac{r}{\sqrt{n}}$.  Then $\set{\overline{B(p_j,r/2)}}$ is a locally finite cover of $\mathbb{R}^n$, with a uniform upper bound on the number of sets covering each point.  If $\chi\in C^\infty_0(B(0,r))$ satisfies $\chi=1$ on $B(0,r/2)$ and $1\geq\chi\geq 0$, we can construct a partition of unity subordinate to $\set{B(p_j,r)}$ by using $\chi_j(x)=\chi(x-p_j)/\left(\sum_k\chi(x-p_k)\right)$.  Because there is a uniform upper bound on the number of nonzero terms in the denominator, we have a uniform bound on $\|\chi_j\|_{C^m}$ for any $m\geq 0$.

Let $\set{U_j}$ be a restriction of this cover to include only those sets covering $\Omega_\epsilon$, with the corresponding modification to $\chi_j$.  For any distribution $v$ on $\Omega_\epsilon$, we set $v_j = v \chi_j$, so $v = \sum_{j=1}^\infty v_j$.
If $\Omega$ admits a uniformly $C^m$ defining function, $m\geq 2$, we define the weighted Sobolev space $W^{k,p}(\Omega_\ep,\vp_t,\nabla\vp_t)$, $0\leq k\leq m$, as the space of distributions $v$ on $\Omega_\ep$
whose partial derivatives up to order $k$ agree with functions and have the
following norm finite:
\[
\norm v \norm_{W^{k,p}(\Omega_\ep,\vp_t,\nabla\vp_t)}^p = \sum_{j=1}^\infty \sum_{|\alpha|\leq k} \| T^\alpha v_j \|_{L^p(\Omega_\ep,\vp_t)}^p
\]
where $T_j = L_j - L_j(\vp_t)$ is well-defined on $U_j$ and the composition $T^\alpha = T_{\alpha_1}\cdots T_{\alpha_{|\alpha|}}$.

For the boundary Sobolev space, set
\begin{multline*}
W^{k,p}(\partial\Omega,\vp_t,\nabla\vp_t) \\ = \{ f \in L^p(\partial\Omega,\vp_t): T^\alpha f \in L^p(\partial\Omega,\vp_t),\ |\alpha|\leq k \text{ and }T_{\alpha_j} \text{ is tangential for } 1\leq j \leq |\alpha|\}.
\end{multline*}

Choosing uniform neighborhoods with good local coordinates only makes sense on domains with positive reach, and compositions of derivatives would be extremely difficult to control without a uniformly
$C^m$ defining function. When $p=2$, we define fractional Sobolev spaces via interpolation and can prove many of the standard Sobolev space results.

We also provide an example from several complex variables.  The following theorem is well known in the bounded case (see for example Theorem 3.4.4 in \cite{ChSh01}).
\begin{thm}
  Let $\Omega\subset\mathbb{C}^n$ be a domain with a $C^2$ defining function $r$ and a constant $C>0$ satisfying
  \begin{equation}
  \label{eq:strict_pseudoconvexity}
    \sum_{j,k=1}^n \frac{\partial^2r}{\partial z_j\partial\bar{z}_k}t_j\bar{t}_k\geq C\abs{\nabla r}\sum_{j=1}^n\abs{t_j}^2
  \end{equation}
  on $\partial\Omega$ for $t\in T_p(\partial\Omega)$.
  \begin{enumerate}
    \item If $\Omega$ admits a uniformly $C^2$ defining function, then there exists a defining function which is strictly plurisubharmonic on $\partial\Omega$.

    \item If $\Omega$ admits a uniformly $C^3$ defining function, then there exists a defining function which is plurisubharmonic on $\Omega$ and strictly plurisubharmonic on $\{z\in\Omega : \delta(z) < \eps\}$ for some $\eps>0$.
  \end{enumerate}
\end{thm}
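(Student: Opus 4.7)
The plan is to adapt the classical construction $\tilde r = e^{Ar}-1$ from the bounded case (Chen--Shaw, Theorem 3.4.4), using Theorem \ref{thm:main thm} and the uniform control afforded by the uniformly $C^m$ hypothesis to choose the constant $A>0$ globally. Throughout, I write $r_j = \partial r/\partial z_j$ and $r_{j\bar k}=\partial^2 r/\partial z_j \partial\bar z_k$.

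For Part (1), at a boundary point $p$ the complex Hessian simplifies (since $r(p)=0$) to
\[
\tilde r_{j\bar k}(p)=A^2 r_j(p)\overline{r_k(p)}+A\,r_{j\bar k}(p),
\]
so that for $t\in\mathbb{C}^n$,
\[
\sum_{j,k}\tilde r_{j\bar k}(p)t_j\bar t_k = A^2\Bigl|\sum_j r_j(p)t_j\Bigr|^2 + A\sum_{j,k}r_{j\bar k}(p)t_j\bar t_k.
\]
I would then decompose $t$ Hermitian-orthogonally as $t=t^T+t^N$, with $t^T\in T^{1,0}_p\partial\Omega=\{t:\sum_j r_j(p)t_j=0\}$ and $t^N$ parallel to $\overline{\partial r}(p)$. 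The first summand equals $A^2|\nabla_\partial r(p)|^2|t^N|^2$, while the strict-pseudoconvexity hypothesis \eqref{eq:strict_pseudoconvexity} bounds $\sum r_{j\bar k}(p)t^T_j\bar t^T_k$ below by $C|\nabla r(p)||t^T|^2$. A weighted Cauchy--Schwarz $2M|t^T||t^N|\leq\eta|t^T|^2+\eta^{-1}M^2|t^N|^2$, with $M$ a uniform upper bound on the Levi form (provided by uniformly $C^2$) and $\eta=C|\nabla r(p)|/2$, absorbs the cross terms and yields
\[
\sum_{j,k}\tilde r_{j\bar k}(p)t_j\bar t_k \geq \tfrac{AC}{2}|\nabla r(p)|\,|t^T|^2 + \bigl(A^2|\nabla_\partial r(p)|^2 - A K\bigr)|t^N|^2
\]
with $K$ depending only on $C$, $M$, and $\inf|\nabla r|$. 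Since uniformly $C^2$ gives $\inf|\nabla r|>0$ and hence $\inf|\nabla_\partial r|>0$, choosing $A$ above a uniform threshold makes this strictly positive at every $p\in\partial\Omega$.

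For Part (2), I would extend the argument into a uniform collar $U_\eps=\{\delta<\eps\}$. The upgrade to uniformly $C^3$ supplies a uniform Lipschitz bound on the Levi form, so $|r_{j\bar k}(q)-r_{j\bar k}(\pi(q))|\leq C'\delta(q)$ with $\pi$ the nearest-boundary projection (well defined on $U_\eps$ for $\eps<\rea(\partial\Omega)$ by Theorem \ref{thm:main thm}). For $\eps$ sufficiently small (uniformly), the tangential lower bound at $\pi(q)$ transfers to $q$ with constant $C/2$ after accounting for the small rotation of the complex tangent space, so the calculation of Part (1) runs at $q$ with a possibly larger but still uniform $A$. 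The result is a function $\tilde r=e^{Ar}-1$ that is strictly plurisubharmonic throughout $U_\eps\cap\Omega$. If the statement ``plurisubharmonic on $\Omega$'' is read as requiring a global extension, one can glue $\tilde r$ to the constant $-\eps'$ on $\Omega\setminus U_\eps$ (for $\eps'$ chosen so that $\tilde r>-\eps'$ on the overlap) by a smooth regularized maximum, which preserves plurisubharmonicity and the defining-function identity $\{\rho<0\}=\Omega$.

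The main obstacle is uniformity: every constant (the threshold for $A$, the collar size $\eps$, the Lipschitz constant for the Levi form) must be chosen independently of the boundary point, and this is precisely where the uniformly $C^3$ hypothesis is indispensable. The pointwise algebra is standard once the uniform bounds are in hand; the small rotation of the complex tangent space off the boundary is controlled by the same Lipschitz bound; and the smoothing of the regularized maximum is a routine patching step.
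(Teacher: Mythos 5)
Your proposal follows essentially the same route as the paper: exponential transform $e^{Ar}-1$ of a defining function, Hermitian tangential/normal decomposition of test vectors, a weighted Cauchy--Schwarz absorption of the cross and normal terms, a uniform choice of $A$, and (for Part (2)) gluing via a regularized maximum over the interior. The pointwise algebra and the final patching step match the paper.

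There is, however, one gap you should fix. You carry out the computation with the given $r$ and invoke ``$M$ a uniform upper bound on the Levi form (provided by uniformly $C^2$)'' and, for Part (2), a uniform Lipschitz bound $|r_{j\bar k}(q)-r_{j\bar k}(\pi(q))|\leq C'\delta(q)$. But the hypothesis only gives that $r$ is $C^2$; the \emph{uniform} bounds are available for the uniformly $C^m$ defining function, which need not be $r$. The paper resolves this by first observing that \eqref{eq:strict_pseudoconvexity} is independent of the choice of defining function (the tangential Levi form scales by $|\nabla r|$, which is exactly how the right-hand side is normalized), then passing to the signed distance function $\tilde\delta$, which by Theorem \ref{thm:main thm} is uniformly $C^m$ on a uniform collar $U_\eps$. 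Working with $\tilde\delta$ also simplifies your constants, since $|\nabla\tilde\delta|\equiv 1$. So you should state explicitly that you transfer \eqref{eq:strict_pseudoconvexity} to $\tilde\delta$ (or to whichever uniformly $C^m$ defining function is assumed) before running the estimate; without that transfer the uniform bound $M$ and the Lipschitz estimate for the Levi form are unjustified.
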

\begin{rem}
  The assumption \eqref{eq:strict_pseudoconvexity} implies that $\Omega$ is strictly pseudoconvex, but the uniform lower bound on the Levi-form is not true for all strictly pseudoconvex domains in the unbounded case.  For an example that satisfies our condition, consider the tube in $\mathbb{C}^n$ defined by the defining function $r(z)=\abs{z'}^2+(\im z_n)^2-1$.
\end{rem}
\begin{rem}
  The second statement is not sharp in the bounded case, where \eqref{eq:strict_pseudoconvexity} alone (without the $C^3$ assumption) will guarantee the existence of a strictly plurisubharmonic defining function.  We require $C^3$ to govern the decay rate of \eqref{eq:strict_pseudoconvexity} off of $\partial\Omega$, and our resulting function is merely plurisubharmonic because we can not use $\abs{z}^2$ to obtain strict plurisubharmonicity in the interior (it is no longer a bounded function).
\end{rem}
\begin{proof}
  Since \eqref{eq:strict_pseudoconvexity} is independent of the choice of defining function, it will be satisfied by the signed distance function.  For $\lambda>0$ to be determined later, define
  \[
    \rho(z)=e^{\lambda\tilde\delta(z)}-1
  \]
  for $z$ in a small neighborhood of $\partial\Omega$.  For $v:\Omega_\epsilon\rightarrow\mathbb{C}^n$, we may
  decompose $v=\tau+\nu$, where $\sum_{j=1}^n\frac{\partial\tilde\delta}{\partial z_j}\tau_j=0$ and $\nu$ is a scalar multiple of
  $\left(\frac{\partial\tilde\delta}{\partial \bar{z}_1},\ldots,\frac{\partial\tilde\delta}{\partial \bar{z}_n}\right)$.
  Then since $\left|\sum_{j=1}^n\frac{\partial\tilde\delta}{\partial z_j}v_j\right|=\frac{1}{2}\sqrt{\sum_{j=1}^n\abs{\nu_j}^2}$ we have
  \[
    \sum_{j,k=1}^n \frac{\partial^2\rho}{\partial z_j\partial\bar{z}_k}v_j \bar{v}_k=\lambda e^{\lambda\tilde\delta}\sum_{j,k=1}^n \frac{\partial^2\tilde\delta}{\partial z_j\partial\bar{z}_k}v_j \bar{v}_k+\lambda^2 e^{\lambda\tilde\delta}\frac{1}{4}\sum_{j=1}^n\abs{\nu_j}^2.
  \]
  Since $\tilde\delta$ satisfies \eqref{eq:strict_pseudoconvexity} on $\partial\Omega$, we have
  \begin{multline*}
    \sum_{j,k=1}^n \frac{\partial^2\rho}{\partial z_j\partial\bar{z}_k}v_j \bar{v}_k\geq\lambda C\sum_{j=1}^n\abs{\tau_j}^2+\lambda \sum_{j,k=1}^n 2\re\left(\frac{\partial^2\tilde\delta}{\partial z_j\partial\bar{z}_k}\tau_j \bar{\nu}_k\right)\\
    +\lambda \sum_{j,k=1}^n \frac{\partial^2\tilde\delta}{\partial z_j\partial\bar{z}_k}\nu_j \bar{\nu}_k+\lambda^2 \frac{1}{4}\sum_{j=1}^n\abs{\nu_j}^2
  \end{multline*}
  on $\partial\Omega$.  Since $\tilde\delta$ is uniformly $C^2$, there exists a constant $C_2>0$ such that $\abs{\frac{\partial^2\tilde\delta}{\partial z_j\partial\bar{z}_k}}\leq C_2$ on $\partial\Omega$.  Hence, the Cauchy-Schwarz inequality gives us
  \[
    \sum_{j,k=1}^n \frac{\partial^2\rho}{\partial z_j\partial\bar{z}_k}v_j \bar{v}_k\geq\lambda C\abs{\tau}^2-2\lambda C_2\abs{\tau}\abs{\nu}
    -\lambda C_2\abs{\nu}^2+\lambda^2 \frac{1}{4}\abs{\nu}^2.
  \]
  This is strictly positive provided that $C\left(\frac{1}{4}\lambda-C_2\right)> 4C_2^2$.  Hence, we may choose $\lambda$ sufficiently large so that $\rho$ is strictly plurisubharmonic on $\partial\Omega$.

  If we assume that $\tilde\delta$ is uniformly $C^3$, then from \eqref{eq:strict_pseudoconvexity} there exists some uniform neighborhood $U$ of $\partial\Omega$ on which
  \[
    \sum_{j,k=1}^n \frac{\partial^2\tilde\delta}{\partial z_j\partial\bar{z}_k}\tau_j\bar{\tau}_k\geq \frac{1}{2}C\sum_{j=1}^n\abs{\tau_j}^2.
  \]
  We may assume $\abs{\frac{\partial^2\tilde\delta}{\partial z_j\partial\bar{z}_k}}\leq C_2$ on $U$, so that
  \[
    \sum_{j,k=1}^n \frac{\partial^2\rho}{\partial z_j\partial\bar{z}_k}v_j \bar{v}_k\geq\lambda e^{\lambda\tilde\delta}\frac{1}{2}C\abs{\tau}^2+2\lambda e^{\lambda\tilde\delta}C_2\abs{\tau}\abs{\nu}
    +\lambda e^{\lambda\tilde\delta}C_2\abs{\nu}^2+\lambda^2 e^{\lambda\tilde\delta}\frac{1}{4}\abs{\nu}^2.
  \]
  This is positive provided that $\frac{1}{2}C\left(\frac{1}{4}\lambda-C_2\right)\geq 4C_2^2$, so we may again choose $\lambda$ sufficiently large so that $\rho$ is plurisubharmonic on $\partial\Omega$.

  To extend $\rho$ to all of $\Omega$, let $A=\sup_{\Omega\backslash U}\tilde\delta$.  Since we were able to choose a uniform neighborhood $U$, $A<0$.  $\hat\rho=\max\set{\rho,A}$ will be a Lipschitz plurisubharmonic defining function for $\Omega$, and a smooth convex approximation to $\max$ can be used to obtain a smooth plurisubharmonic defining function for $\Omega$.
\end{proof}

\bibliographystyle{alpha}
\bibliography{mybib11-8-11}

\end{document}